\newtheorem{theorem}{Theorem}[section]
\newtheorem{lemma}[theorem]{Lemma}
\newtheorem*{lemma*}{Lemma}
\newtheorem{corollary}[theorem]{Corollary}
\newcommand{\inner}[2]{\langle #1,#2\rangle}
\newcommand{\norm}[1]{\|{#1}\|}
\newcommand{\wto}{{\;\stackrel {w\,\,} \to\; }}
\newcommand\set[1]{\{#1\}}
\newcommand{\R}{\mathbb{R}}
\newcommand{\tos}{\rightrightarrows}
\newcommand{\comenta}[1]{}
\title{A simplified proof of weak convergence in Douglas-Rachford method
to a solution of the unnderlying inclusion problem}
\author{B. F. Svaiter\thanks{IMPA, Estrada Dona Castorina 110,
    22460--320 Rio de Janeiro, Brazil ({\texttt{benar@impa.br}}) 
    tel: 55 (21) 25295112, fax: 55 (21)25124115.  }\hspace{.5em}
  \thanks{Partially supported by CNPq
   grant 306247/2015-1
    and by 
    FAPERJ
    grant Cientistas de Nosso Estado
    E-26/201.584/2014
         and
    E-26/203.318/2017}}
\begin{document}
\maketitle
\begin{abstract}
  Douglas-Rachford method is a splitting algorithm for finding a zero
  of the sum of two maximal monotone operators.
  Weak convergence
  in this method
  to
  a solution of the underlying monotone inclusion problem
  in the general case remained an open problem for 30 years and was
  prove by the author 7 year ago.
  The proof presented at that occasion was cluttered with technicalities
  because we considered the inexact version with summable errors.
  The aim of this note is to present a streamlined proof of this
  result.
  \\
  \\
  2000 Mathematics Subject Classification: 47H05, 49M27,
  49J52,
  49J45.
  \\
  \\
  Key words: Douglas-Rachford method, weak convergence, 
  monotone operators.
\end{abstract}

\pagestyle{plain}

Douglas-Rachford method is an iterative splitting algorithm for
finding a zero of a sum of two maximal monotone operators.  It was
originally proposed by Douglas and Rachford for solving the
discretized Poisson equation
\begin{equation*}
  A(w)+B(w)=f\qquad w\in W,
\end{equation*}
where $A$ and $B$ are, respectively, the discretization of
$-\partial^2/\partial x^2$ and $-\partial^2/\partial y^2$ in the
interior of the domain and $f$ is the problem's datum, while $W$ is the
family of functions on the discretized domain which satisfies the
prescribed boundary condition,
With this notation, the method proposed
by Douglas-Rachford~\cite[Part II, eq.\ (7.4)]{MR0084194} writes
\begin{align*}
  \lambda_n A(y_{n+1/2})+y_{n+1/2}
  &=x_n-\lambda_nB(x_n)\\
  \lambda_n B(x_{n+1})+x_{n+1}
  &=x_n-\lambda_nA(y_{n+1/2}),
\end{align*}
with $f=0$ or with $f$ incorporated to $A$.

Lions and Mercier~\cite{MR0551319} extended this procedure with
$\lambda=\lambda_n$ fixed, for arbitrary maximal monotone operators,
although in their analysis, for the general case,
weak convergence of
an associated sequence, but
not of $\set{x_n}$, was proved.
The solution was to be retrieved applying the resolvent of $B$ to the
weak limit of the associated sequence.

Weak convergence of $\set{x_n}$ to a solution was proved
by the author~\cite{MR2783211}, 30 years afterwards Lions and Mercier seminal
work.
This proof of weak convergence was cluttered with
technicalities because the inexact case with summable error
was considered.
The aim of this note is to present a streamlined version of that proof.
This note does not present any new result or idea, we just
developed more directly the ideas of~\cite{MR2783211} without the
technicalities that attends the use of summable error criteria.

\section{Basic definitions and results}
\label{sec:bdr}

From now on, $X$ is a real Hilbert space with inner product
$\inner{\cdot}{\cdot}$ and associated norm $\norm{\cdot}$.
Strong and weak convergence of a sequence $\set{x_n}$ in $X$ to $x$
will be denoted by $x_n \to x$ and $x_n \wto x$, respectively.
We consider in $X\times X$ the canonical inner product and norm of
Hilbert space products,
\begin{align*}
  \inner{(z,w)}{(z',w')}
  =\inner{z}{z'}+\inner{w}{w'},\qquad
  \norm{(z,w)}=\sqrt{\inner{(z,w)}{(z,w)}},
\end{align*}
which makes it also an Hilbert space.

A point to set operator $T:X\tos X$ is a relation in $X$, that is
$T\subset X\times X$; for any $x\in X$, 
\begin{align*}
  T(x)=\set{y:(x,y)\in T},\qquad
  T^{-1}(y)=\set{x: (x,y)\in T}.
\end{align*}
For $ S:X\tos $, $T:X\tos X $, and $ \lambda \in \R $,
the operators 
$ S+T \tos X$ and $\lambda T : X \tos X $
are defined, respectively, as
\begin{align*}
  (S+T)(x)=\set{y+y' : y\in S(x),y'\in T(x)},\qquad
  (\lambda T)(x)=\set{\lambda y : y\in T(x)}.
\end{align*}
A function $f:X\supset D\to X$ is identified with the point to set operator
$F=\set{(x,y): x\in D,\;y=f(x)}$.

An operator $ T : X \tos X $ is \emph{monotone} if
\begin{align*}
  \inner{x-x'}{y-y'} \geq 0 \qquad
  \forall\, (x,y),(x',y')\in T
\end{align*}
and it is \emph{maximal monotone} if it is a maximal element in the
family of monotone operators in $X$ with respect with the partial
order of inclusion, when regarded as a subset of $ X \times X $.

\emph{Minty's Theorem}~\cite{MR0169064} states, among other things,
that if $T$ is maximal monotone, then for each $z\in Z$ there is a
unique $(x,y)\in X\times X$ such that
\begin{align*}
  x+y=z,\qquad y\in T(x).
\end{align*}
The \emph{resolvent}, \emph{proximal mapping}, or \emph{proximal operator}
of a maximal monotone operator $T$ with stepsize $\lambda>0$ is
$J_{\lambda T}=(I+\lambda T)^{-1}$.
In view of Minty's theorem, the proximal mapping (of a maximal monotone
operator) is a function whose domain is the whole underlying Hilbert
space.

Opial's Lemma~\cite{MR0211301}, which we state next, will be used in our
proof.

\begin{lemma}[Opial's Lemma]
  \label{lm:op}
  Let $\set{u_n}$ be a sequence in a Hilbert space $Z$. If
  $u_n\wto u$, then for any $v\neq u$
  \begin{align*}
    \liminf_{n\to\infty}\norm{u_n-v}>\liminf_{n\to\infty}\norm{u_n-u}.
  \end{align*}
\end{lemma}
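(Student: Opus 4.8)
The plan is to reduce the statement to an elementary Hilbert-space identity and then use the fact that weak convergence kills a linear term. For fixed $v$ and each $n$, expand
\begin{align*}
  \norm{u_n-v}^2
  = \norm{u_n-u}^2 + 2\inner{u_n-u}{u-v} + \norm{u-v}^2 .
\end{align*}
Since $u_n\wto u$ means $\inner{u_n}{w}\to\inner{u}{w}$ for every $w\in Z$, taking $w=u-v$ gives $\inner{u_n-u}{u-v}\to 0$; hence the sequence $a_n:=2\inner{u_n-u}{u-v}+\norm{u-v}^2$ converges to $\norm{u-v}^2$, while $b_n:=\norm{u_n-u}^2$ is an arbitrary (bounded) nonnegative sequence, and $\norm{u_n-v}^2=a_n+b_n$.

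Next I would take $\liminf$ on both sides. Using the elementary identity $\liminf_n(a_n+b_n)=\big(\lim_n a_n\big)+\liminf_n b_n$, valid because $\{a_n\}$ converges, I obtain
\begin{align*}
  \liminf_{n\to\infty}\norm{u_n-v}^2
  = \norm{u-v}^2 + \liminf_{n\to\infty}\norm{u_n-u}^2 .
\end{align*}
Here I would record that a weakly convergent sequence is norm-bounded (Banach--Steinhaus), so both $\liminf$'s are finite real numbers; and since $v\neq u$ we have $\norm{u-v}^2>0$, whence $\liminf_n\norm{u_n-v}^2>\liminf_n\norm{u_n-u}^2$.

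Finally I would pass from squared norms back to norms: since $t\mapsto t^2$ is continuous and strictly increasing on $[0,\infty)$ it commutes with $\liminf$ of bounded nonnegative sequences, i.e. $\liminf_n\norm{u_n-w}^2=\big(\liminf_n\norm{u_n-w}\big)^2$ for $w\in\{u,v\}$, so taking square roots of the strict inequality (both sides nonnegative) yields the claim. I do not expect a genuine obstacle here, as the whole argument is only a few lines; the one point deserving care is the interchange of $\liminf$ with the sum and with the squaring, which is precisely where convergence of the cross term and boundedness of $\{u_n\}$ enter.
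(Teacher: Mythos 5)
Your proof is correct. The paper does not prove Opial's Lemma at all --- it simply states it and cites Opial's original 1967 paper --- so there is no in-paper argument to compare against; what you give is the standard proof (expand $\norm{u_n-v}^2$ around $u$, kill the cross term by weak convergence, use $\liminf(a_n+b_n)=\lim a_n+\liminf b_n$ for convergent $\{a_n\}$, and note $\norm{u-v}^2>0$). Your care about boundedness of the sequence and the interchange of $\liminf$ with squaring is exactly the right care to take, and both steps go through as you describe.
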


The following  trivial lemma is a particular case of a more
general result~\cite{MR1783979}.

\begin{lemma}
  \label{lm:prox}
  If $T:X\tos X$ is monotone, $v\in T(x)$, $v'\in T(x')$ and
  \begin{align*}
    v+x-(v'+x')=r
  \end{align*}
 then $\norm{v-v'}^2+\norm{x-x'}^2\leq\norm{r}^2$.
\end{lemma}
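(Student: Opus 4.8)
The plan is essentially a one-line computation. Using the hypothesis in the form $r=(v-v')+(x-x')$ and expanding the square of the norm by bilinearity of the inner product gives
\begin{align*}
  \norm{r}^2=\norm{v-v'}^2+2\inner{v-v'}{x-x'}+\norm{x-x'}^2.
\end{align*}
Since $v\in T(x)$, $v'\in T(x')$ and $T$ is monotone, the cross term is nonnegative, $\inner{v-v'}{x-x'}=\inner{x-x'}{v-v'}\ge 0$, so discarding it yields $\norm{v-v'}^2+\norm{x-x'}^2\le\norm{r}^2$, which is the assertion.

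There is no real obstacle: this is just the parallelogram-type identity $\norm{a+b}^2=\norm{a}^2+2\inner{a}{b}+\norm{b}^2$ specialized to $a=v-v'$, $b=x-x'$, together with the definition of monotonicity; equality holds exactly when $\inner{x-x'}{v-v'}=0$. The lemma is isolated only because it is the computational workhorse of the convergence proof — it will be applied with $T$ taken to be $\lambda A$ or $\lambda B$ along consecutive iterates, and it encodes the firm nonexpansiveness of the resolvents in the product Hilbert space norm introduced on $X\times X$ above. (In the inexact setting of \cite{MR2783211} one carries an additional summable-error term through the very same two lines, which is precisely the source of the bookkeeping this note aims to eliminate.)
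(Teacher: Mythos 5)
Your proof is correct and is exactly the paper's argument: expand $\norm{r}^2=\norm{v-v'}^2+\norm{x-x'}^2+2\inner{v-v'}{x-x'}$ and discard the cross term, which is nonnegative by monotonicity of $T$. Nothing further is needed.
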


\begin{proof}
  Write $\norm{r}^2=\norm{v-v'}^2+\norm{x-x'}^2+2\inner{v-v'}{x-x'}$ and use
  the monotonicity of $T$.
\end{proof}

We will also need the following specialization of a result due
Bauschke~\cite[Corollary 3]{MR2556354}.

\begin{lemma}
  \label{lm:a2}
  Suppose $T_1$ and $T_2$ maximal monotone operators in $X$. If
  $(z_{k,i},v_{k,i})\in T_i$ for $i=1,2$,  $k=1,2,\dots$, and
  \[
    z_{k,1}-z_{k,2}\to 0,\quad v_{k,1}+v_{k,2}\to 0,\quad z_k\wto z,
    \text{ and }v_{k,1}\wto v
    \qquad\text{ as }k\to\infty
  \]
  then $(z,v)\in T_1$ and $(z,-v)\in T_2$.
\end{lemma}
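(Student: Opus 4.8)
The plan is to reduce the statement to the defining property of maximality: for a maximal monotone operator $T$ (which is necessarily nonempty) and any point $(\bar z,\bar v)$, the quantity $\inf_{(x,y)\in T}\inner{x-\bar z}{y-\bar v}$ is always $\le 0$, and it equals $0$ precisely when $(\bar z,\bar v)\in T$ --- if $(\bar z,\bar v)\in T$ the expression under the infimum is nonnegative by monotonicity and vanishes at $(x,y)=(\bar z,\bar v)$, while if $(\bar z,\bar v)\notin T$ maximality furnishes some $(x,y)\in T$ with $\inner{x-\bar z}{y-\bar v}<0$. I would use this for $T_1$ at the point $(z,v)$ and for $T_2$ at the point $(z,-v)$ simultaneously.

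First I would record the routine consequences of the hypotheses. Set $\delta_k=z_{k,1}-z_{k,2}$ and $\epsilon_k=v_{k,1}+v_{k,2}$, so $\delta_k\to0$ and $\epsilon_k\to0$; the sequences $\set{z_{k,1}}$ and $\set{v_{k,1}}$ are bounded (being weakly convergent), hence so are $\set{z_{k,2}}$ and $\set{v_{k,2}}$, and $z_{k,2}=z_{k,1}-\delta_k\wto z$, $v_{k,2}=-v_{k,1}+\epsilon_k\wto -v$. The one genuinely useful observation is that the ``diagonal'' inner products almost cancel:
\[
  \inner{z_{k,1}}{v_{k,1}}+\inner{z_{k,2}}{v_{k,2}}
  =\inner{z_{k,1}}{\epsilon_k}+\inner{\delta_k}{v_{k,1}}-\inner{\delta_k}{\epsilon_k}\ \longrightarrow\ 0,
\]
where the limit uses boundedness of $\set{z_{k,1}}$ and $\set{v_{k,1}}$ against $\delta_k,\epsilon_k\to0$.

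Now fix arbitrary $(z',v')\in T_1$ and $(z'',v'')\in T_2$. Adding the monotonicity inequalities $\inner{z_{k,1}-z'}{v_{k,1}-v'}\ge0$ and $\inner{z_{k,2}-z''}{v_{k,2}-v''}\ge0$, expanding, and passing to the limit --- the mixed terms converge because $z_{k,1},z_{k,2}\wto z$, $v_{k,1}\wto v$, $v_{k,2}\wto -v$, and the diagonal terms are controlled by the display above --- one obtains, after rearrangement,
\[
  \inner{z'-z}{v'-v}+\inner{z''-z}{v''+v}\ \ge\ 0
  \qquad\text{for all }(z',v')\in T_1,\ (z'',v'')\in T_2 .
\]
Since the two summands involve disjoint variables, fixing one point and taking the infimum over the other yields $m_1+m_2\ge0$, where $m_1:=\inf_{(z',v')\in T_1}\inner{z'-z}{v'-v}$ and $m_2:=\inf_{(z'',v'')\in T_2}\inner{z''-z}{v''+v}$ (both finite, since the displayed inequality bounds each from below). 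By the characterization in the first paragraph, $m_1\le0$ and $m_2\le0$; combined with $m_1+m_2\ge0$ this forces $m_1=m_2=0$, i.e.\ $(z,v)\in T_1$ and $(z,-v)\in T_2$.

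I expect the crux to be the point at which the argument is forced to use both operators together. There is no reason for $\inner{z_{k,1}}{v_{k,1}}$ on its own to converge to $\inner{z}{v}$ --- the inner product of weak limits need not be the limit of the inner products --- so one cannot simply pass to the limit in the monotonicity inequality for $T_1$ alone. What rescues the argument is that the hypotheses $z_{k,1}-z_{k,2}\to0$ and $v_{k,1}+v_{k,2}\to0$ make the \emph{sum} of the two diagonal terms vanish; combined with maximality of both $T_1$ and $T_2$ --- encoded in $m_1\le0$, $m_2\le0$ with $m_1+m_2\ge0$ --- this closes the proof with no further analytic input.
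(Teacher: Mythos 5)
Your proof is correct. There is, however, no in-paper proof to compare it against: the paper states Lemma~\ref{lm:a2} as a specialization of a result of Bauschke (Corollary~3 of the cited note) and gives no argument, so you have in effect supplied the missing external ingredient. Your route is the standard one for such ``weak--weak with strongly vanishing coupling'' closedness statements, and it is essentially the Fitzpatrick-function argument unpacked: your quantity $m_1=\inf_{(z',v')\in T_1}\inner{z'-z}{v'-v}$ equals $\inner{z}{v}-F_{T_1}(z,v)$ where $F_{T_1}$ is the Fitzpatrick function of $T_1$, your characterization ``$m_1\le 0$ always, with equality iff $(z,v)\in T_1$'' is the classical inequality $F_{T_1}\geq\inner{\cdot}{\cdot}$ with equality exactly on $T_1$, and the cancellation of the two diagonal terms $\inner{z_{k,1}}{v_{k,1}}+\inner{z_{k,2}}{v_{k,2}}\to 0$ is what makes the weak lower semicontinuity of $F_{T_1}+F_{T_2}$ close the argument even though neither diagonal term converges on its own. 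All the delicate points are handled: the maximality characterization of the sign of the infimum is justified in both directions, boundedness of the weakly convergent sequences is used where needed, and the finiteness of $m_1$ and $m_2$ (bounded below by the joint inequality, above by $0$ since a maximal monotone operator is nonempty) is noted before you add them. The correct diagnosis of the crux --- that neither monotonicity inequality can be passed to the limit alone, and only the sum survives --- is exactly the content of Bauschke's lemma.
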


\section{Douglas-Rachford method}
\label{sec:cs}

From now on $A,B:X\tos X$ are maximal monotone operators. We are
concerned with the problem
\begin{align}
  \label{eq:msip}
  0\in A(x)+B(x).
\end{align}
Lions and Mercier's extension of Douglas-Rachford method can be
stated as:
choose $\lambda>0$, $x_0,b_0\in X$ and for $n=1,2,\dots$
\begin{align}
  \label{eq:dri}
  \begin{alignedat}{4}
    &\text{compute }y_n,a_n\text{ such that}\;\;
    & a_n&\in A(y_n),\;\;
    &\lambda a_n+y_n
    &=x_{n-1}-\lambda b_{n-1},
    \\
    &\text{compute }x_n,b_n\text{ such that}
    &\;\; b_{n}&\in B(x_{n}),
    &\lambda b_{n}+x_{n}
    &=y_n+\lambda b_{n-1}.
  \end{alignedat}
\end{align}
Well definedness of this procedure follows from Minty's Theorem.
\emph{Previous convergence result~\cite{MR0551319} for general maximal
monotone operators where weak convergence of $\set{x_k+b_k}$ to a point in
the pre-image of the solution set of \eqref{eq:msip} by the mapping
$(I+B)_{-1}$.}
In general the resolvent is not sequentially
weakly continuous, therefore, this result does not implies weak convergence of
$\set{x_k}$ to a solution.

In what follows $\set{(x_n,b_n)}$ and $\set{(y_n,a_n)}$ is a pair of
sequences generated by Douglas-Rachford method.
Define
\begin{align}
  \zeta_n=y_n+\lambda b_{n-1}.
\end{align}
It follows from \eqref{eq:dri} that
\begin{align*}
  \zeta_n=J_{\lambda A}(2J_{\lambda B}-I)\zeta_{n-1}
  +(I-J_{\lambda B})\zeta_{n-1} \qquad(n=1,2,\dots),
\end{align*}
which is another classical presentation of Douglas-Rachford method.
Let us write some of Lions and Mercier's results on this method with the
notation~\eqref{eq:dri}.

\begin{theorem}[Lions and Mericer~\cite{MR0551319}]
  \label{th:lm}
  If~\eqref{eq:msip} has a solution, then $\set{x_n}$ is bounded,
  \begin{align*}
    \zeta_n-\zeta_{n-1}=y_n-x_{n-1}=-\lambda(a_n+b_{n-1})\to 0,\quad
    x_n-x_{n-1}=-\lambda(a_n+b_n) \to 0 \qquad
    \text{as }n\to \infty,
  \end{align*}
  and $\set{\zeta_n}$ converges weakly to a $\zeta$ such that
  $J_{\lambda B}(\zeta)$ is a solution of this inclusion
  problem.
\end{theorem}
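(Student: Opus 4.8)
The plan is to show that $\set{\zeta_n}$ is Fej\'er monotone with respect to the point $\zeta^{*}=x^{*}+\lambda b^{*}$ attached to any solution $x^{*}$ of~\eqref{eq:msip}, to read the norm assertions off that fact, and then to obtain the weak convergence from Lemmas~\ref{lm:a2} and~\ref{lm:op}. I would start with the algebraic identities hidden in~\eqref{eq:dri}: its $B$-step reads $x_n+\lambda b_n=y_n+\lambda b_{n-1}=\zeta_n$, so $\zeta_{n-1}=x_{n-1}+\lambda b_{n-1}$ as well, and substituting the $A$-step $y_n=x_{n-1}-\lambda b_{n-1}-\lambda a_n$ gives
\begin{align*}
  \zeta_{n-1}-\zeta_n=x_{n-1}-y_n=\lambda(a_n+b_{n-1}),\qquad x_{n-1}-x_n=\lambda(a_n+b_n),
\end{align*}
which are the two displayed equalities; it is also convenient to record $\zeta_n=x_{n-1}-\lambda a_n$.

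Now fix a solution $x^{*}$ of~\eqref{eq:msip}, choose $b^{*}\in B(x^{*})$ with $-b^{*}\in A(x^{*})$, and set $\zeta^{*}=x^{*}+\lambda b^{*}$. The heart of the argument is the estimate
\begin{align*}
  \norm{\zeta_n-\zeta^{*}}^2+\norm{\zeta_n-\zeta_{n-1}}^2\le\norm{\zeta_{n-1}-\zeta^{*}}^2 .
\end{align*}
Writing $\zeta_{n-1}-\zeta^{*}=(\zeta_n-\zeta^{*})+(\zeta_{n-1}-\zeta_n)$ and expanding the square reduces it to $\inner{\zeta_n-\zeta^{*}}{\zeta_{n-1}-\zeta_n}\ge0$. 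Using $\zeta_n-\zeta^{*}=(x_{n-1}-x^{*})-\lambda(a_n+b^{*})$ (from $\zeta_n=x_{n-1}-\lambda a_n$) and $\zeta_{n-1}-\zeta_n=(x_{n-1}-x^{*})-(y_n-x^{*})$ together with the identities above, one checks that
\begin{align*}
  \inner{\zeta_n-\zeta^{*}}{\zeta_{n-1}-\zeta_n}=\inner{x_{n-1}-x^{*}}{\lambda(b_{n-1}-b^{*})}+\lambda\inner{a_n-(-b^{*})}{y_n-x^{*}} ,
\end{align*}
and both terms are nonnegative: the first because $b_{n-1}\in B(x_{n-1})$, $b^{*}\in B(x^{*})$ and $B$ is monotone, the second because $a_n\in A(y_n)$, $-b^{*}\in A(x^{*})$ and $A$ is monotone. (This estimate is precisely the firm nonexpansiveness of the Douglas--Rachford operator $J_{\lambda A}(2J_{\lambda B}-I)+(I-J_{\lambda B})$ evaluated at $\zeta_{n-1}$ and at its fixed point $\zeta^{*}$.)

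From this estimate the norm assertions are routine. Iterating it makes $\set{\zeta_n}$ Fej\'er monotone with respect to $\zeta^{*}$, hence bounded; since resolvents are nonexpansive (Lemma~\ref{lm:prox}), $\set{x_n}=\set{J_{\lambda B}\zeta_n}$ is bounded, and then so are $\set{y_n}$ and $\set{a_n}$ by the identities above. Summing the estimate telescopes to $\sum_n\norm{\zeta_n-\zeta_{n-1}}^2<\infty$, so $\zeta_n-\zeta_{n-1}=y_n-x_{n-1}=-\lambda(a_n+b_{n-1})\to0$. Finally, Lemma~\ref{lm:prox} applied to $\lambda B$ with the pairs $(\lambda b_n,x_n)$, $(\lambda b_{n-1},x_{n-1})$ and $r=\zeta_n-\zeta_{n-1}$ gives $\norm{x_n-x_{n-1}}^2\le\norm{\zeta_n-\zeta_{n-1}}^2\to0$, that is $x_n-x_{n-1}=-\lambda(a_n+b_n)\to0$; in particular $y_n-x_n\to0$ and $a_n+b_n\to0$.

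It remains to prove $\zeta_n\wto\zeta$ with $J_{\lambda B}\zeta$ a solution, which is the delicate part. Given any subsequence with $\zeta_{n_k}\wto\zeta$, boundedness lets me pass to a further subsequence along which $x_{n_k}\wto\bar x$ and $a_{n_k}\wto v$; then $y_{n_k}\wto\bar x$ since $y_n-x_n\to0$, and $b_{n_k}\wto -v$ since $a_n+b_n\to0$. Applying Lemma~\ref{lm:a2} with $T_1=A$, $T_2=B$, $(z_{k,1},v_{k,1})=(y_{n_k},a_{n_k})$ and $(z_{k,2},v_{k,2})=(x_{n_k},b_{n_k})$ --- whose hypotheses are exactly $y_n-x_n\to0$ and $a_n+b_n\to0$ --- gives $v\in A(\bar x)$ and $-v\in B(\bar x)$, so $\bar x$ solves~\eqref{eq:msip}; moreover $\zeta$ is the weak limit of $x_{n_k}+\lambda b_{n_k}=\zeta_{n_k}$, namely $\zeta=\bar x+\lambda(-v)$ with $-v\in B(\bar x)$, i.e.\ $\bar x=J_{\lambda B}\zeta$. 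Thus every weak cluster point $p$ of $\set{\zeta_n}$ has the form $x'+\lambda b'$ with $b'\in B(x')$ and $-b'\in A(x')$, so the estimate above applies with $\zeta^{*}$ replaced by $p$ and $\norm{\zeta_n-p}$ converges. Were there two distinct weak cluster points $p\ne p'$, Opial's Lemma applied to a subsequence converging weakly to $p$ (with $v=p'$) and to one converging weakly to $p'$ (with $v=p$) would force $\lim\norm{\zeta_n-p}<\lim\norm{\zeta_n-p'}<\lim\norm{\zeta_n-p}$, which is impossible; hence $\set{\zeta_n}$ has a unique weak cluster point $\zeta$, so $\zeta_n\wto\zeta$, and $J_{\lambda B}\zeta=\bar x$ is a solution. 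The main obstacle is this last step: passing from ``$\zeta_{n_k}\wto\zeta$'' to ``$J_{\lambda B}\zeta$ solves~\eqref{eq:msip}'' without a demiclosedness principle, which is exactly what Lemma~\ref{lm:a2} provides; Opial's Lemma then merely promotes one cluster point to full weak convergence.
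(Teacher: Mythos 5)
Your proof is correct, but it does not follow the paper's route, chiefly because the paper offers no proof of Theorem~\ref{th:lm} at all: the theorem is quoted from Lions and Mercier, and the machinery the paper actually builds (Lemma~\ref{lm:fejer} and the proof of Theorem~\ref{th:main}) is aimed at the stronger statement that $\set{(x_n,b_n)}$ itself converges weakly to a point of $S_e(A,B)$. The two arguments are structurally parallel (Fej\'er monotonicity, telescoping, Lemma~\ref{lm:prox}, Lemma~\ref{lm:a2}, Opial) but rest on genuinely different Fej\'er functionals. You work in $X$ with $\norm{\zeta_n-\zeta^*}$, where $\zeta^*=x^*+\lambda b^*$ ranges over the fixed points of the Douglas--Rachford map; your key inequality $\norm{\zeta_n-\zeta^*}^2+\norm{\zeta_n-\zeta_{n-1}}^2\le\norm{\zeta_{n-1}-\zeta^*}^2$, together with the decomposition of $\inner{\zeta_n-\zeta^*}{\zeta_{n-1}-\zeta_n}$ into the two monotonicity terms, checks out and is exactly the classical firm-nonexpansiveness estimate of Lions--Mercier. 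The paper instead works in $X\times X$ with $\norm{p_n-p}$ for $p_n=(x_n,b_n)$ and $p\in S_e(A,B)$, which drops the cross term $2\lambda\inner{x_n-x^*}{b_n-b^*}$ hidden in $\norm{\zeta_n-\zeta^*}^2$; that is precisely what lets the paper conclude weak convergence of $x_n$ and $b_n$ separately rather than only of the combination $\zeta_n=x_n+\lambda b_n$, so your version buys Theorem~\ref{th:lm} as stated and no more, which is appropriate here. Two smaller points in your favour: your use of Lemma~\ref{lm:a2} to identify the weak cluster points replaces the demiclosedness principle for nonexpansive maps that Lions and Mercier invoked, and is the same device the paper deploys for Theorem~\ref{th:main}; and your Opial step correctly secures the existence of $\lim_n\norm{\zeta_n-p}$ for every weak cluster point $p$ by first showing that each such $p$ has the form $x'+\lambda b'$ with $(x',b')\in S_e(A,B)$, so the Fej\'er estimate applies to it.
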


The following corollary is an immediate consequence of those results
of Lions and Mercier work~\cite{MR0551319} summarized in
Theorem~\ref{th:lm}.

\begin{corollary}
  \label{cr:lm2}
  If~\eqref{eq:msip} has a solution, then 
  \begin{align*}
    y_n-x_n\to0,\;\;b_n-b_{n-1}\to 0,\;\;
    a_n-a_{n-1}\to0,\;\;y_n-y_{n-1}\to 0
  \end{align*}
  as $n\to\infty$.
\end{corollary}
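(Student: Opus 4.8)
The plan is to extract everything in the corollary directly from the two conclusions of Theorem~\ref{th:lm}, namely $y_n-x_{n-1}\to 0$ and $x_n-x_{n-1}\to 0$, together with the recursion \eqref{eq:dri} and the nonexpansiveness-type estimate in Lemma~\ref{lm:prox}. First I would get $y_n-x_n\to 0$ for free, by writing $y_n-x_n=(y_n-x_{n-1})+(x_{n-1}-x_n)$ and invoking both limits in Theorem~\ref{th:lm}. Next, since $\zeta_n-\zeta_{n-1}=-\lambda(a_n+b_{n-1})\to 0$ and $x_n-x_{n-1}=-\lambda(a_n+b_n)\to 0$, subtracting these gives $\lambda(b_n-b_{n-1})=(\zeta_n-\zeta_{n-1})-(x_n-x_{n-1})\to 0$, hence $b_n-b_{n-1}\to 0$ because $\lambda>0$.

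For $a_n-a_{n-1}\to 0$ I would use Lemma~\ref{lm:prox} applied to the operator $A$. From the first line of \eqref{eq:dri} we have $a_n\in A(y_n)$ and $a_{n-1}\in A(y_{n-1})$ with $\lambda a_n+y_n=x_{n-1}-\lambda b_{n-1}$ and $\lambda a_{n-1}+y_{n-1}=x_{n-2}-\lambda b_{n-2}$; after dividing by $\lambda$ and using the monotone operator $\lambda A$, Lemma~\ref{lm:prox} yields
\[
  \lambda^2\norm{a_n-a_{n-1}}^2+\norm{y_n-y_{n-1}}^2
  \le \norm{(x_{n-1}-x_{n-2})-\lambda(b_{n-1}-b_{n-2})}^2.
\]
The right-hand side tends to $0$ by Theorem~\ref{th:lm} (for the $x$-differences) and by the $b_n-b_{n-1}\to 0$ just established, so simultaneously $a_n-a_{n-1}\to 0$ and $y_n-y_{n-1}\to 0$. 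This single application of Lemma~\ref{lm:prox} delivers the last two claims at once.

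The only mild subtlety — which is really the ``main obstacle'' such as it is — is bookkeeping the index shifts: the estimate for step $n$ involves the data at steps $n$ and $n-1$, so the right-hand side features $x_{n-1}-x_{n-2}$ and $b_{n-1}-b_{n-2}$ rather than $x_n-x_{n-1}$ and $b_n-b_{n-1}$; since these shifted sequences are just tails of sequences already shown to converge to $0$, this causes no difficulty. One should also note that applying Lemma~\ref{lm:prox} requires rewriting $\lambda a_n+y_n=\zeta$ as $a_n+(1/\lambda)y_n=(1/\lambda)\zeta$ so that the pairs $((1/\lambda)y_n,a_n)$ lie in the monotone operator built from $A$ in the form required by the lemma — equivalently, apply the lemma with $\lambda A$ in place of $T$ after multiplying the defining equation through by the appropriate scalar. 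With that, all four limits follow and the proof is complete. \qed
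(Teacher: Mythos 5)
Your proof is correct, and the first two limits are obtained exactly as in the paper: $y_n-x_n=(y_n-x_{n-1})-(x_n-x_{n-1})$ and $b_n-b_{n-1}=(a_n+b_n)-(a_n+b_{n-1})$ (your $\zeta$-based computation is the same identity in disguise). Where you diverge is in the last two limits. The paper stays purely algebraic there as well: it writes $a_n-a_{n-1}=(a_n+b_{n-1})-(a_{n-1}+b_{n-1})$ and $y_n-y_{n-1}=(y_n-x_{n-1})-(y_{n-1}-x_{n-2})+(x_{n-1}-x_{n-2})$, so the whole corollary follows from Theorem~\ref{th:lm} by linear combinations alone, with no use of monotonicity. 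You instead invoke Lemma~\ref{lm:prox} for the operator $\lambda A$ on the pairs $(y_n,\lambda a_n)$, $(y_{n-1},\lambda a_{n-1})$, obtaining
\begin{align*}
  \lambda^2\norm{a_n-a_{n-1}}^2+\norm{y_n-y_{n-1}}^2
  \le \norm{(x_{n-1}-x_{n-2})-\lambda(b_{n-1}-b_{n-2})}^2,
\end{align*}
which is a valid application (though note your displayed inequality corresponds to using $\lambda A$ directly, not to the ``divide by $\lambda$'' normalization you describe in your last paragraph; either normalization works, but they are not the same rescaling). Your route is slightly heavier --- it needs monotonicity of $A$ and the previously established $b_n-b_{n-1}\to 0$ --- but it delivers both remaining limits from a single inequality, and it is in fact exactly the argument the paper itself uses later to re-prove item~\ref{it:t4} of Theorem~\ref{th:main} in its self-contained proof. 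The paper's version of the corollary buys elementarity; yours buys a quantitative bound tying $\norm{a_n-a_{n-1}}$ and $\norm{y_n-y_{n-1}}$ to the previous increments. Both are complete proofs.
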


\begin{proof}
  Write
  \begin{align*}
    &y_n-x_n=y_n-x_{n-1}-(x_n-x_{n-1}),
    \\
    &b_n-b_{n-1}=a_n+b_n-(a_n+b_{n-1}),
    \\
    &a_n-a_{n-1}=a_n+b_{n-1}-(a_{n-1}+b_{n-1}),
    \\
    &y_n-y_{n-1}=y_n-x_{n-1}-(y_{n-1}-x_{n-2})+(x_{n-1}-x_{n-2})
  \end{align*}
  and use Theorem~\ref{th:lm}.
\end{proof}

The \emph{extended solution set} of~\eqref{eq:msip}, as defined
in~\cite{MR2322888}, is
\begin{align}
  \label{eq:se}
  S_e(A,B)=\set{(z,w):w\in B(z),\;-w\in A(z)}.
\end{align}
It is trivial to verify that $z$ is a solution of \eqref{eq:msip}
if and only if $(z,w)\in S_e(A,B)$ for some $w$.
This set will be instrumental in the proof of weak convergence of
$\set{x_n}$ to a solution of~\eqref{eq:msip}.
It is easy to prove that $S_e(A,B)$ is closed and convex;
however, we will not explicitly use these properties.

Our aim is to prove the following theorem.
\begin{theorem}
  \label{th:main}
  Let $\set{a_k}$, $\set{y_k}$, $\set{b_k}$, and $\set{x_k}$
  are sequences generated by Douglas-Rachford method~\eqref{eq:dri}.
  If the solution set of \eqref{eq:msip} is non-empty, then
  \begin{enumerate}
  \item\label{it:t2}
    $\norm{x_n-y_n}\to 0$ and $\norm{a_n+b_n}\to 0$
    as $n\to \infty$.
  \item\label{it:t3}
    $\norm{x_n-x_{n-1}}\to 0$ and $\norm{b_n-b_{n-1}}\to 0$,
    as $n\to \infty$;
  \item\label{it:t4}
    $\norm{y_n-y_{n-1}}\to 0$, and $\norm{a_n-a_{n-1}}\to 0$
    as $n\to \infty$;
  \end{enumerate}
  \item\label{it:t1}
  and the sequences  $\set{(x_n,b_n)}$, $\set{(y_n,-a_n)}$ converges weakly
    to a point in $S_e(A,B)$;
\end{theorem}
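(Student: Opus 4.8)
Items (1)–(3) are essentially restatements of Theorem~\ref{th:lm} and Corollary~\ref{cr:lm2}, so the real content is item (4): weak convergence of $\{(x_n,b_n)\}$ (and hence of $\{(y_n,-a_n)\}$, since $x_n-y_n\to 0$ and $a_n+b_n\to 0$) to a point of $S_e(A,B)$. I would run the standard Opial argument on the product space $X\times X$ applied to the sequence $\{(x_n,b_n)\}$.

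Let me write this out in more detail.

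First, I need Fejér-type monotonicity of $\{(x_n,b_n)\}$ with respect to $S_e(A,B)$. Fix $(z,w)\in S_e(A,B)$, so $w\in B(z)$ and $-w\in A(z)$. The idea is to apply Lemma~\ref{lm:prox} twice, once to $A$ and once to $B$, using the two update equations in \eqref{eq:dri}. From the first line of \eqref{eq:dri}, $a_n\in A(y_n)$ and $-w\in A(z)$, and the "sum" is $(\lambda a_n + y_n) - (\lambda(-w)+z) = x_{n-1}-\lambda b_{n-1}+\lambda w - z$; from the second line, $b_n\in B(x_n)$ and $w\in B(z)$, with sum $(\lambda b_n + x_n)-(\lambda w + z) = y_n + \lambda b_{n-1} - \lambda w - z$. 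Scaling appropriately (Lemma~\ref{lm:prox} applied to $\lambda A$ and $\lambda B$, which are monotone), these two inequalities should combine — after cancellation of the cross terms $y_n$ and $\lambda b_{n-1}$ — into an inequality of the form $\norm{(x_n,b_n)-(z,w)}^2 \le \norm{(x_{n-1},b_{n-1})-(z,w)}^2 - (\text{nonnegative residual})$, where the residual is controlled by $\lambda^2\norm{a_n+b_{n-1}}^2$ and $\lambda^2\norm{a_n+b_n}^2$ or similar. This gives that $\norm{(x_n,b_n)-(z,w)}$ is nonincreasing, hence convergent, for every $(z,w)\in S_e(A,B)$; in particular $\{(x_n,b_n)\}$ is bounded.

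Second, I need to identify every weak cluster point. Suppose a subsequence $(x_{n_k},b_{n_k})\wto(\bar z,\bar w)$. By item (1), $y_{n_k}-x_{n_k}\to 0$, so $y_{n_k}\wto \bar z$ as well, and $a_{n_k}+b_{n_k}\to 0$, so $a_{n_k}\wto -\bar w$. Now apply Lemma~\ref{lm:a2} with $T_1=A$, $T_2=B$: we have $(y_{n_k},a_{n_k})\in A$ and $(x_{n_k},b_{n_k})\in B$; also $y_{n_k}-x_{n_k}\to 0$ and $a_{n_k}+b_{n_k}\to 0$; and $y_{n_k}\wto\bar z$ (the hypothesis "$z_k\wto z$" of Lemma~\ref{lm:a2} — there is a slight indexing sloppiness in that lemma's statement, but the intended meaning is that one of the $z$-sequences converges weakly), and $a_{n_k}\wto -\bar w$. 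Lemma~\ref{lm:a2} then yields $(\bar z,-\bar w)\in A$ and $(\bar z,\bar w)\in B$, i.e. $(\bar z,\bar w)\in S_e(A,B)$. So every weak cluster point of $\{(x_n,b_n)\}$ lies in $S_e(A,B)$.

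Third, the Opial argument closes it: if $(\bar z,\bar w)$ and $(\hat z,\hat w)$ were two distinct weak cluster points, both lie in $S_e(A,B)$ by the previous step, so both $\lim_n\norm{(x_n,b_n)-(\bar z,\bar w)}$ and $\lim_n\norm{(x_n,b_n)-(\hat z,\hat w)}$ exist by the Fejér property. Passing to the subsequence converging weakly to $(\bar z,\bar w)$ and applying Opial's Lemma (Lemma~\ref{lm:op}) in $X\times X$ gives $\lim\norm{(x_n,b_n)-(\hat z,\hat w)} > \lim\norm{(x_n,b_n)-(\bar z,\bar w)}$; by symmetry the reverse strict inequality also holds, a contradiction. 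Hence the weak cluster point is unique and $\{(x_n,b_n)\}$ converges weakly to it. Finally, since $x_n-y_n\to 0$ and $a_n+b_n\to 0$ strongly, $\{(y_n,-a_n)\}$ converges weakly to the same limit, which lies in $S_e(A,B)$.

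**Main obstacle.** The delicate computational step is the second one above — getting the Fejér inequality to come out cleanly. One must choose the right scaling in Lemma~\ref{lm:prox} (applying it to $\lambda A$ and $\lambda B$ rather than $A$ and $B$) and arrange the algebra so that the awkward cross-term $\inner{x_{n-1}-x_n}{b_{n-1}-b_n}$-type quantities telescope or are absorbed into the known-null sequences from Theorem~\ref{th:lm}; it may be cleanest to phrase it directly in terms of $\zeta_n=y_n+\lambda b_{n-1}$ and observe that $(x_n,b_n)$ and the pair $(z,w)$ are related to $\zeta_n$ and $\zeta:=z+\lambda w$ by the resolvent $J_{\lambda B}$, then use nonexpansiveness. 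Everything else is a routine instantiation of the general Fejér–Opial scheme.
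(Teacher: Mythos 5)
Your proposal is correct and follows the same overall architecture as the paper's proof: Fej\'er monotonicity of $\set{(x_n,b_n)}$ with respect to $S_e(A,B)$, identification of every weak cluster point as an element of $S_e(A,B)$ via Lemma~\ref{lm:a2} applied to $(y_{n_k},a_{n_k})\in A$ and $(x_{n_k},b_{n_k})\in B$, uniqueness of the cluster point via Opial's Lemma, and transfer to $\set{(y_n,-a_n)}$ using item~\ref{it:t2}. The one place where you genuinely deviate is the derivation of the Fej\'er inequality: you propose applying Lemma~\ref{lm:prox} twice, to $\lambda A$ at $(y_n,a_n)$ versus $(z,-w)$ and to $\lambda B$ at $(x_n,b_n)$ versus $(z,w)$, and adding; the paper instead proves it in Lemma~\ref{lm:fejer} by directly expanding $\inner{p-p_n}{p_n-p_{n-1}}$ and invoking the monotonicity of $A$ and $B$. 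Your variant does close: with $\lambda=1$ the two instances of Lemma~\ref{lm:prox} read $\norm{a_n+w}^2+\norm{y_n-z}^2\leq\norm{(x_{n-1}-z)-(b_{n-1}-w)}^2$ and $\norm{b_n-w}^2+\norm{x_n-z}^2\leq\norm{(y_n-z)+(b_{n-1}-w)}^2$, and upon adding, expanding the squares, and substituting $\norm{a_n+w}^2=\norm{a_n+b_{n-1}}^2-2\inner{a_n+b_{n-1}}{b_{n-1}-w}+\norm{b_{n-1}-w}^2$ together with $y_n-x_{n-1}=-(a_n+b_{n-1})$, every cross term cancels and one recovers exactly the inequality of Lemma~\ref{lm:fejer}, namely $\norm{p-p_{n-1}}^2\geq\norm{p-p_n}^2+\norm{a_n+b_{n-1}}^2$; so the ``main obstacle'' you flag is not actually an obstacle, and there is no need to pass to $\zeta_n$ and nonexpansiveness of the resolvent. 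The only other difference is that you import items \ref{it:t2}--\ref{it:t4} from Theorem~\ref{th:lm} and Corollary~\ref{cr:lm2}, whereas the paper re-proves them for self-containedness (again via Lemma~\ref{lm:prox}: applied to $B$ at consecutive iterates for items \ref{it:t2} and \ref{it:t3}, and to $A$ for item \ref{it:t4}, each time fed by the summability of $\norm{a_n+b_{n-1}}^2$ coming from the Fej\'er inequality); this is a matter of presentation, not of correctness.
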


Items \ref{it:t2}, \ref{it:t3}, and \ref{it:t4} are either in
Theorem~\ref{th:lm} or in Corollary~\ref{cr:lm2}, which is a direct
consequences of Theorem~\ref{th:lm}.
Since Theorem~\ref{th:lm} summarize some results form \cite{MR0551319},
the proper reference for
items \ref{it:t2}, \ref{it:t3}, and \ref{it:t4}
of Theorem~\ref{th:main} is \cite{MR0551319}.
We do not pretend to be the authors of these items.
Nevertheless, we
will prove these items again, for the sake of completeness and because they
merge into the main result, namely, weak convergence of
$\set{(x_n,b_n)}$ and $\set{(y_k,-a_k)}$ to a solution a point in
$S_e(A,B)$.  
It follows trivially from this results that $\set{x_k}$
and $\set{y_k}$ converge weakly to a solution of \eqref{eq:msip}.

Convergence of $\set{(x_n,b_n)}$ and $\set{(y_k,-a_k)}$ to a solution
a point in $S_e(A,B)$ was proved in~\cite[Theorem 1]{MR2783211} in a
more general context, that is, in the case where the proximal
subproblems in~\eqref{eq:dri}, \eqref{eq:dris} are solved
inexactly within a summable error tolerance.
Since here we assume that there are no errors in the the solution
of the proximal subproblems in~\eqref{eq:dri}, \eqref{eq:dris} the basic ideas of the proof
of weak convergence~\cite{MR2783211} can be used without the technicalities
which attend the use of summable error criteria.
For example, instead of using Qasi-Fej\'er convergence, we will be
able to use Fej\'er convergence etc.

\section{Proof of Theorem~\ref{th:main}}
\label{sec:pmt}

Observe that $S_e(\lambda A,\lambda B)=\set{(z,\lambda w)\,:\,
    (s,w)\in S_e(A,B)}$
and for any sequence $\set{(z_n,w_n)}$ in $X \times X$,
\begin{align*}
  (z_n,w_n)\wto (z,w)\text{ as }n\to \infty\iff (z_n,\lambda w_n)\wto (z,\lambda w)\text{ as }n\to \infty.
\end{align*}
Moreover, since one can define $\tilde A=\lambda A$ and
$\tilde B=\lambda B$, and apply~\eqref{eq:dri} to $\tilde A$ and
$\tilde B$ instead of $A$ and $B$, without loss of generality we
assume from now on that $\lambda=1$.

Since we are assuming that $\lambda=1$, \eqref{eq:dri} writes
\begin{equation}
  \label{eq:dris}
  \begin{alignedat}{2}
    a_n& \in A(y_n),&\quad a_n+y_n&=x_{n-1}-b_{n-1},
    \\
    b_n& \in B(x_n),&\quad b_n+x_n&=y_n+b_{n-1},
    \qquad\qquad(n=1,2,\dots).
  \end{alignedat}
\end{equation}
Let
\begin{align}
  \label{eq:pn}
  p_n=(x_n,b_n)\qquad(n=1,2,3,\dots)
\end{align}
Our aim is to prove that $\set{p_n}$ converges weakly to a point in
$S_e(A,B)$.  First we will prove that this sequence is Fej\'er
convergent to $S_e(A,B)$.

The first inequality in the next lemma, namely Lemma~\ref{lm:fejer}, was proved
in~\cite[Lemma 2]{MR2783211}.

\begin{lemma}[\mbox{\cite[Lemma 2]{MR2783211}}]
  \label{lm:fejer}
  If $p\in S_e(A,B)$, then for all $n$,
  \begin{alignat*}{1}
    \norm{p-p_{n-1}}^2&\geq\norm{p-p_n}^2+\norm{a_n+b_{k-1}}^2,
    \\
    \norm{p-p_0}^2&\geq\norm{p-p_{n}}^2+\sum_{k=1}^n\norm{a_k+b_{k-1}}^2.
  \end{alignat*}
\end{lemma}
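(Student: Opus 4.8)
\textbf{Proof proposal for Lemma~\ref{lm:fejer}.}
The plan is to apply Lemma~\ref{lm:prox} twice per iteration, once to $A$ at the point $y_n$ and once to $B$ at the point $x_n$, choosing the comparison points from the extended solution set. Fix $p=(z,w)\in S_e(A,B)$, so that $w\in B(z)$ and $-w\in A(z)$. First I would rewrite the two equations in \eqref{eq:dris} so that each has the form ``resolvent datum minus resolvent datum'' used in Lemma~\ref{lm:prox}. For the $A$-step, $a_n\in A(y_n)$ and $-w\in A(z)$, and
\begin{align*}
  (a_n+y_n)-(-w+z)=(x_{n-1}-b_{n-1})-(z-w),
\end{align*}
so Lemma~\ref{lm:prox} gives $\norm{a_n+w}^2+\norm{y_n-z}^2\le\norm{(x_{n-1}-z)-(b_{n-1}-w)}^2$. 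For the $B$-step, $b_n\in B(x_n)$ and $w\in B(z)$, and from $b_n+x_n=y_n+b_{n-1}$ one gets $(b_n+x_n)-(w+z)=(y_n-z)+(b_{n-1}-w)$, hence $\norm{b_n-w}^2+\norm{x_n-z}^2\le\norm{(y_n-z)+(b_{n-1}-w)}^2$.

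Next I would chain these two estimates. Expanding the right-hand side of the $A$-estimate and the right-hand side of the $B$-estimate by the parallelogram-type identity $\norm{u\pm v}^2=\norm{u}^2+\norm{v}^2\pm2\inner{u}{v}$, and noting that both right-hand sides involve the same cross term $\pm2\inner{y_n-z}{b_{n-1}-w}$ with opposite signs once we substitute the bound for $\norm{y_n-z}^2$ from the first inequality into the second, the cross terms should cancel. Concretely, adding is not quite what is wanted; rather, use the $A$-estimate to bound $\norm{y_n-z}^2\le\norm{x_{n-1}-z}^2-2\inner{x_{n-1}-z}{b_{n-1}-w}+\norm{b_{n-1}-w}^2-\norm{a_n+w}^2$ and substitute into the $B$-estimate's right-hand side $\norm{y_n-z}^2+2\inner{y_n-z}{b_{n-1}-w}+\norm{b_{n-1}-w}^2$. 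The term $2\inner{y_n-z}{b_{n-1}-w}$ must then be re-expressed using $y_n-z=(x_{n-1}-b_{n-1})-(z-w)-(a_n+w)=(x_{n-1}-z)-(b_{n-1}-w)-(a_n+w)$, which turns it into $2\inner{x_{n-1}-z}{b_{n-1}-w}-2\norm{b_{n-1}-w}^2-2\inner{a_n+w}{b_{n-1}-w}$; the first of these kills the cross term carried over from the $A$-estimate. After collecting terms one should arrive at
\begin{align*}
  \norm{x_n-z}^2+\norm{b_n-w}^2\le\norm{x_{n-1}-z}^2+\norm{b_{n-1}-w}^2-\norm{a_n+w}^2-\big(\text{a perfect square or nonnegative remainder}\big).
\end{align*}
Recognizing $\norm{p-p_n}^2=\norm{x_n-z}^2+\norm{b_n-w}^2$ and identifying the surviving negative term as $\norm{a_n+b_{n-1}}^2$ (using $-w$ cancels in $a_n+w$ against $b_{n-1}-w$ appropriately, or absorbing the extra square nonnegatively) yields the first inequality of the lemma. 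The second inequality is then immediate: telescope the first over $k=1,\dots,n$.

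The main obstacle I anticipate is the bookkeeping in the cancellation of cross terms: getting the negative term to come out exactly as $\norm{a_n+b_{n-1}}^2$ rather than merely as some nonnegative quantity bounded below by it. The cleanest route is probably to introduce the shifted variables $\tilde x_{n}=x_n-z$, $\tilde b_n=b_n-w$, $\tilde a_n=a_n+w$, $\tilde y_n=y_n-z$, in which \eqref{eq:dris} becomes $\tilde a_n+\tilde y_n=\tilde x_{n-1}-\tilde b_{n-1}$ and $\tilde b_n+\tilde x_n=\tilde y_n+\tilde b_{n-1}$ with $\tilde a_n\in A(y_n)-(-w)$ still ``monotone-compatible'' with $\tilde 0$ at $0$, so that the two applications of Lemma~\ref{lm:prox} read $\norm{\tilde a_n}^2+\norm{\tilde y_n}^2\le\norm{\tilde x_{n-1}-\tilde b_{n-1}}^2$ and $\norm{\tilde b_n}^2+\norm{\tilde x_n}^2\le\norm{\tilde y_n+\tilde b_{n-1}}^2$; expanding both squares and using $\tilde y_n=\tilde x_{n-1}-\tilde b_{n-1}-\tilde a_n$ to eliminate $\tilde y_n$ from the second makes the cross terms line up mechanically, and $\tilde a_n+\tilde b_{n-1}=a_n+b_{n-1}$ identifies the leftover term directly. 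I would carry out the algebra in those shifted coordinates and then translate back.
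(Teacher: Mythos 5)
Your proposal is correct, and the algebra closes exactly as you hope: in your shifted variables, substituting the $A$-estimate $\norm{\tilde y_n}^2\le\norm{\tilde x_{n-1}-\tilde b_{n-1}}^2-\norm{\tilde a_n}^2$ together with the identity $\tilde y_n=\tilde x_{n-1}-\tilde b_{n-1}-\tilde a_n$ into the expansion $\norm{\tilde b_n}^2+\norm{\tilde x_n}^2\le\norm{\tilde y_n}^2+2\inner{\tilde y_n}{\tilde b_{n-1}}+\norm{\tilde b_{n-1}}^2$ makes the $\inner{\tilde x_{n-1}}{\tilde b_{n-1}}$ cross terms cancel and the $\norm{\tilde b_{n-1}}^2$ coefficients sum to zero, leaving exactly $\norm{\tilde x_{n-1}}^2+\norm{\tilde b_{n-1}}^2-\norm{\tilde a_n+\tilde b_{n-1}}^2$ on the right; since $\tilde a_n+\tilde b_{n-1}=a_n+b_{n-1}$, this is precisely the first inequality of the lemma (with the statement's $b_{k-1}$ read as $b_{n-1}$), with no extra nonnegative slack to discard, and telescoping gives the second. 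Your route differs from the paper's in organization though not in substance. The paper does not invoke Lemma~\ref{lm:prox} here; it bounds the product-space inner product $\inner{p-p_n}{p_n-p_{n-1}}$ from below by $\inner{x_n-y_n}{a_n+b_n}$ using the two monotonicity inequalities (of $A$ between $(y_n,a_n)$ and $(z,-w)$, of $B$ between $(x_n,b_n)$ and $(z,w)$ --- the same two facts your two applications of Lemma~\ref{lm:prox} encode), and then the identity $\norm{p-p_{n-1}}^2=\norm{p-p_n}^2+\norm{p_n-p_{n-1}}^2+2\inner{p-p_n}{p_n-p_{n-1}}$ makes the remainder appear at once as the perfect square $\norm{(x_n-y_n)+(a_n+b_n)}^2=\norm{a_n+b_{n-1}}^2$, via $x_n-y_n=b_{n-1}-b_n$. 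Your version buys modularity, reusing Lemma~\ref{lm:prox} exactly as the paper itself does later for items \ref{it:t2}--\ref{it:t4} of Theorem~\ref{th:main}, at the price of the cross-term bookkeeping you anticipated; the paper's version is shorter because the product-space formulation discards both monotonicity terms in one step and exhibits the remainder directly as a square.
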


\begin{proof}
  Fix
  $p=(z,w)\in S_e(A,B)$.
  It follows from the inclusions in \eqref{eq:se} and~\eqref{eq:dris} and
  from the monotonicity of $A$ and $B$ 
  that
  \begin{align*}
    \inner{z-x_n}{x_n-x_{n-1}}
    &
      = \inner{z-x_n}{-a_n-b_n}
    \\
    &
      = \inner{x_n-x_n}{w-b_n}+
      \inner{x_n-y_n}{-w-a_n}
      +\inner{y_n-x_n}{-w-a_n}
    \\
    &
      \geq \inner{y_n-x_n}{-w-a_n}.
  \end{align*}
  Direct combination of this inequality with the second equality in
  \eqref{eq:dris} yields
  \begin{align*}
    \inner{p-p_n}{p_n-p_{n-1}}
    &
      = \inner{z-x_n}{x_n-x_{n-1}}+\inner{w-b_n}{b_n-b_{n-1}}
    \\
    &
      \geq \inner{y_n-x_n}{-w-a_n}+\inner{w-b_n}{y_n-x_n}. 
  \end{align*}
  Since the expression at the right hand-side of the above inequality
  does not depends on $w$, we can substitute $b_n$ for $w$ in this
  expression to conclude that
  \begin{align*}
    \inner{p-p_n}{p_n-p_{n-1}}
    \geq \inner{x_n-y_n}{a_n+b_n}.
  \end{align*}
  Therefore
  \begin{align*}
    \norm{p-p_{n-1}}^2
    &=\norm{p-p_n}^2+
      \norm{p_n-p_{n-1}}^2+2\inner{p-p_n}{p_n-p_{n-1}}^2
    \\
    & \geq \norm{p-p_n}^2+\norm{a_n+b_n}^2+
      \norm{x_n-y_n}^2+2\inner{x_n-y_n}{a_n+b_n}
  \end{align*}
  which is trivially equivalent to the first inequality of the lemma.
  The second inequality of the lemma follows trivially from the first
  one.
  \end{proof}

  \begin{proof}[proof of Theorem~\ref{th:main}]
  Suppose the solution set of \eqref{eq:msip} is nonempty. In this
  case, $S_e(A,B)$ is nonempty and it follows from
  Lemma~\ref{lm:fejer} that $\set{p_k}$ is bounded and
  \begin{align*}
    \sum_{i=1}^\infty\norm{a_i-b_{i-1}}^2 <\infty.
  \end{align*}
   Therefore
   \begin{align}
     \label{eq:lim1}
     a_n+b_{n-1}=x_{n-1}-y_n\to0\qquad\text{as }i\to \infty.
   \end{align}

   Since
   \begin{align}
     b_n+x_n-(x_{n-1}+b_{n-1})=
     y_n+b_{n-1}-(x_{n-1}+b_{n-1})=y_n-x_{n-1},
   \end{align}
   it follows from the equalities in \eqref{eq:dris}, from the inclusion $b_n\in B(x_n)$,
   $b_{n-1} \in B(x_{n-1})$,
   and from Lemma~\ref{lm:prox}
   that
   \begin{align}
     \norm{a_n+b_n}^2+\norm{x_n-y_k}^2
     &=\norm{b_n-b_{n-1}}^2+\norm{x_n-x_{n-1}}^2
   \leq
     \norm{y_n-x_{n-1}}^2=\norm{a_n+b_{n-1}}^2.
   \end{align}
   This inequality, together with \eqref{eq:lim1} imply  items \ref{it:t2} and \ref{it:t3}.
   
   To prove item~\ref{it:t4}, write
   \begin{align*}
     a_n+y_n-(a_{n-1}-y_{n-1})
     =x_{n-1}-b_{n-1}-(a_{n-1}-y_{n-1})
     =x_{n-1}-y_{n-1}-(a_{n-1}+b_{n-1})
   \end{align*}
   and use the inclusions $a_n\in A(y_n)$, $b_{n-1} \in B ( y_{n-1})$, and Lemma~\ref{lm:prox}
   to obtain the inequality
   \begin{align*}
     \norm{a_n-a_{n-1}}^2+\norm{y_n-y_{n-1}}^2
     \leq
     \norm{x_{n-1}-y_{n-1}-(a_{n-1}+b_{n-1})}^2.
   \end{align*}
   To end the proof of item \ref{it:t4} use this inequality and item~\ref{it:t2}.

   Suppose $\set{p_{n_k}}$ converges weakly to $(z,w)$. It follows from item~\ref{it:t2}
   and Lemma~\ref{lm:a2} that $(z,w)\in S_e(A,B)$. Therefore, all
   weak limit points of  $\set{p_n}$ belongs to $S_e(A,B)$.
   Since $\set{p_n}$ converges Fej\'er to  $S_e(A,B)$,
   it follows from Opial's Lemma that this sequence has at most one weak limit
   point in that set.
   Therefore, the bounded sequence $\set{p_n}$ has a unique weak limit point and such a limit
   point belongs to $S_e(A,B)$, which is equivalent to weak convergence of
   $\set{p_n}$ to a point in $S_e(A,B)$.

   Let $p\in S_e(A,B)$ be the weak limit of $\set{p_n=(x_n,b_n)}$.
   Weak convergence of $\set{(y_n,-a_n)}$ to $p$ follows trivially from
   item \ref{it:t2}.
\end{proof}

\end{document}